\title[Nunke type classification]{A Nunke type classification in the locally compact setting}
\author{Samuel M. Corson}
\author{Olga Varghese}
\theoremstyle{definition}\newtheorem{theorem}{Theorem}
\theoremstyle{definition}
\theoremstyle{definition}\newtheorem{corollary}[theorem]{Corollary}
\theoremstyle{definition}\newtheorem{proposition}[theorem]{Proposition}
\theoremstyle{definition}
\theoremstyle{definition}
\theoremstyle{definition}
\theoremstyle{definition}\newtheorem{remark}[theorem]{Remark}
\theoremstyle{definition}
\theoremstyle{definition}\newtheorem{lemma}[theorem]{Lemma}
\theoremstyle{definition}
\theoremstyle{definition}
\theoremstyle{definition}
\theoremstyle{definition}
\theoremstyle{definition}
\theoremstyle{definition}
\begin{document}

\address{Instituto de Ciencias Matem\'aticas CSIC-UAM-UC3M-UCM, 28049 Madrid, Spain.}
\email{sammyc973@gmail.com}

\address{Department of Mathematics, M\"unster University, Einsteinstra{\upshape{\ss}}e 62, 48149, M\"unster, Germany}.

\email{olga.varghese@uni-muenster.de}

\keywords{automatic continuity, slender group, lcH-slender group, locally compact group}
\subjclass[2010]{Primary: 22D05, 22D12}
\thanks{The work of the first author was supported by the Severo Ochoa Programme for Centres of Excellence in R\&D SEV-20150554.  The work of the second author was funded by the Deutsche Forschungsgemeinschaft (DFG, German Research Foundation) under Germany's Excellence Strategy EXC 2044–390685587, Mathematics M\"nster: Dynamics-Geometry-Structure.}

\begin{abstract} In this short note we prove that a group $G$ is lcH-slender- that is, every abstract group homomorphism from a locally compact Hausdorff topological group to $G$ has an open kernel- if and only if $G$ is torsion-free and does not include $\mathbb{Q}$ or the p-adic integers $\mathbb{Z}_p$ for any prime $p$.  This mirrors a classical characterization given by Nunke for slender abelian groups. 
\end{abstract}

\maketitle

\begin{section}{Introduction}

R. Nunke produced in 1961 a remarkable theorem which comprehensively characterizes a special class of abelian groups via their subgroups \cite{Nu}.  An abelian group $A$ is \emph{slender} if for every abstract group homomorphism $\phi$ whose domain is the countably infinite product $\prod_{\omega}\mathbb{Z}$ and whose codomain is $A$ there exists an $m\in \omega$ such that $\phi = \phi\circ p_m$, where $p_m: \prod_{\omega}\mathbb{Z} \rightarrow \prod_{k=0}^m\mathbb{Z} \times (0)_{k=m+1}^{\infty}$ is the obvious retraction map \cite{Fu}.  Thus $A$ is slender provided each homomorphism $\phi:\prod_{\omega}\mathbb{Z} \rightarrow A$ depends on only finitely many coordinates, or equivalently, provided any such $\phi$ has open kernel (endowing $\prod_{\omega}\mathbb{Z}$ with the Tychonov topology where each coordinate is discrete).  Nunke's theorem is that an abelian group $A$ is slender if and only if $A$ is torsion-free and does not include $\prod_{\omega}\mathbb{Z}$ or $\mathbb{Q}$ or a p-adic integer group $\mathbb{Z}_p$ for any prime $p$.

Slenderness can be seen as an automatic continuity condition: endowing an abelian slender group $A$ a discrete topology one sees that any homomorphism from $\prod_{\omega}\mathbb{Z}$ to $A$ is continuous.  By analogy, one defines a (not necessarily abelian) group $G$ to be \emph{locally compact Hausdorff slender} (abbrev. \emph{lcH-slender}) provided every abstract group homomorphism from a locally compact Hausdorff topological group to $G$ has open kernel \cite{ConCor}.  An early result of Dudley shows that, for example, free (abelian) groups are lcH-slender \cite{D}.  More recent results extend the known lcH-slender groups to include each group whose abelian subgroups are free \cite{KV}.

We deduce the following complete classification of lcH-slender groups via their subgroups:

\begin{theorem}\label{maintheorem}  A group $G$ is lcH-slender if and only if $G$ is torsion-free and does not include $\mathbb{Q}$ or any p-adic integer group $\mathbb{Z}_p$ as a subgroup.
\end{theorem}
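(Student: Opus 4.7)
The plan is to treat necessity and sufficiency separately, with sufficiency carrying the bulk of the structural work.

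For \emph{necessity}, under each of the three forbidden features I would construct an explicit LCH group $L$ and an abstract homomorphism $L \to G$ whose kernel is not open. If $\mathbb{Z}_p \leq G$, the inclusion from the non-discrete compact group $\mathbb{Z}_p$ has trivial kernel, which is not open. If $\mathbb{Q} \leq G$, I view $\mathbb{R}$ as a $\mathbb{Q}$-vector space, pick via a Hamel basis a nontrivial $\mathbb{Q}$-linear functional $\mathbb{R} \to \mathbb{Q}$, and compose with the inclusion; the kernel is a proper subgroup of the connected group $\mathbb{R}$, hence not open. If $g \in G$ has prime order $p$, take $L = \prod_\omega \mathbb{F}_p$ with product topology (compact), view it as an $\mathbb{F}_p$-vector space of dimension $2^{\aleph_0}$, and define via a Hamel basis an $\mathbb{F}_p$-linear functional $L \to \mathbb{F}_p$ that does not depend on any finite coordinate set; composition with the inclusion $\mathbb{F}_p \cong \langle g\rangle \hookrightarrow G$ yields a homomorphism whose index-$p$ kernel fails to be closed, hence fails to be open.

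For \emph{sufficiency}, suppose $G$ is torsion-free with no copy of $\mathbb{Q}$ or any $\mathbb{Z}_p$. The first observation is that every abelian subgroup $A \leq G$ is slender: the hypotheses pass to $A$, and since $\mathbb{Z}_p$ embeds into $\prod_\omega \mathbb{Z}$, $A$ also avoids $\prod_\omega \mathbb{Z}$, so Nunke's theorem applies. The consequences I will repeatedly use are $\mathrm{Hom}(\mathbb{Q}, A) = 0$ (any image is divisible and torsion-free, hence a $\mathbb{Q}$-vector space that must vanish) and $\mathrm{Hom}(\mathbb{Z}_p, A) = 0$ for each prime $p$ (via the surjection $\prod_\omega \mathbb{Z} \twoheadrightarrow \mathbb{Z}_p$, $(a_n) \mapsto \sum_n a_n p^n$, combined with slenderness of $A$).

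Now let $\phi: L \to G$ be given with $L$ LCH. Combining van Dantzig's theorem (applied to $L/L_0$) with Gleason--Yamabe, I obtain an open almost-connected subgroup $U \leq L$ together with a compact normal subgroup $K$ of $U$ such that $U/K$ is a connected Lie group; showing $\phi(U) = \{1\}$ gives $\ker\phi \supseteq U$ open. The connected Lie quotient $U/K$ is generated by one-parameter subgroups, each mapped under $\phi$ into a divisible torsion-free subgroup of $G$, hence into a $\mathbb{Q}$-vector space, hence to the identity; so $\phi$ factors through $K$. The identity component $K_0$ of $K$ is compact connected, hence divisible, giving $\phi(K_0) = \{1\}$ by the same argument, so $\phi$ factors through the profinite $K/K_0$. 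On $K/K_0$, elements of finite order map to $1$ by torsion-freeness of $G$, and for an element $x$ of infinite order the closure $\overline{\langle x\rangle}$ is a procyclic group of the form $\prod_{p \in S}\mathbb{Z}_p$ (times finite), mapping into a slender abelian $A \leq G$; a \L o\'s-type decomposition $\mathrm{Hom}(\prod_p \mathbb{Z}_p, A) = \bigoplus_p \mathrm{Hom}(\mathbb{Z}_p, A) = 0$ then forces $\phi(x) = 1$, finishing the argument.

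The principal obstacle is justifying the \L o\'s-type decomposition: the pointwise vanishing $\mathrm{Hom}(\mathbb{Z}_p, A) = 0$ does not automatically extend to $\prod_p \mathbb{Z}_p$, since diagonal elements need not be supported on any single factor. This step will require either invoking \L o\'s' theorem for slender targets or giving a direct argument exploiting both the slender structure of $A$ and the profinite topology of $\prod_p \mathbb{Z}_p$.
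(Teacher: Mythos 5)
Your necessity argument is fine and is essentially the paper's. The sufficiency half, however, rests on a false reduction. Your ``first observation'' that every abelian subgroup $A\leq G$ is slender is wrong, and so is the stated reason: $\mathbb{Z}_p$ does \emph{not} embed into $\prod_{\omega}\mathbb{Z}$, since $\operatorname{Hom}(\mathbb{Z}_p,\mathbb{Z})=0$ (the image of the $q$-divisible group $\mathbb{Z}_p$, $q\neq p$, would be a $q$-divisible subgroup of $\mathbb{Z}$) and hence $\operatorname{Hom}(\mathbb{Z}_p,\prod_{\omega}\mathbb{Z})=0$. Worse, the conclusion itself fails: $G=\prod_{\omega}\mathbb{Z}$ is torsion-free and contains neither $\mathbb{Q}$ nor any $\mathbb{Z}_p$, so it satisfies the hypotheses of the theorem, yet it is not slender (it contains itself). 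Consequently you cannot invoke slenderness of $A$ to get $\operatorname{Hom}(\mathbb{Z}_p,A)=0$, and the step you yourself flag as the principal obstacle, $\operatorname{Hom}\bigl(\prod_p\mathbb{Z}_p,A\bigr)=0$, has no justification along the proposed lines: \L o\'s' theorem requires a slender target, which $A$ need not be. As written, the profinite part of your argument is genuinely open. Two smaller points: Gleason--Yamabe gives a Lie quotient $U/K$ with finitely many components, not a connected one (fixable, since a finite image in torsion-free $G$ is trivial), and ``by the same argument'' for $\phi(K_0)=\{1\}$ applies an abelian argument (divisible torsion-free image is a $\mathbb{Q}$-vector space) to the possibly nonabelian group $\phi(K_0)$, where divisibility alone does not obviously produce a copy of $\mathbb{Q}$.

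The repair is the cotorsion machinery, which is exactly what the paper uses and which also collapses most of your structure theory. The hypotheses on $G$ say precisely that every abelian subgroup is cotorsion-free (Fuchs, Theorem 13.3.8), while the image of a compact abelian group under an abstract homomorphism is cotorsion (a compact abelian group is algebraically compact, and cotorsion groups are their homomorphic images). Hence for any compact group $K$ and any $k\in K$, the image $\phi(\overline{\langle k\rangle})$ is both cotorsion and cotorsion-free, so trivial; this single lemma kills \emph{all} compact subgroups at once, with no need for Gleason--Yamabe, no splitting of $K$ into $K_0$ and $K/K_0$, and no procyclic/\L o\'s analysis. The paper then finishes more simply than your outline: Iwasawa's structure theorem writes the identity component $L^\circ$ as $H_0\cdots H_jK$ with $H_i\cong\mathbb{R}$ and $K$ compact (the $\mathbb{R}$-factors die by your divisible-image argument, which is valid there since the images are abelian), and van Dantzig applied to $L/L^\circ$ produces a compact open subgroup whose preimage lies in $\ker\phi$. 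If you want to keep your outline, the fixes are: replace every appeal to slenderness of $A$ by cotorsion-freeness, and replace the \L o\'s-type decomposition by the observation that $\phi(\overline{\langle x\rangle})$ is a cotorsion subgroup of a cotorsion-free group.
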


This classification was recently shown in the case where $G$ is abelian \cite{C}.  Theorem \ref{maintheorem} provides the following further examples of lcH-slender groups (using \cite[Theorem I.4.1 (vii)]{Da} and \cite[Theorem 1]{N}, respectively):

\begin{corollary}\label{niceclasses}  Torsion-free CAT(0) groups and torsion-free one-relator groups are lcH-slender.
\end{corollary}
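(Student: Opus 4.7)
The plan is to apply Theorem \ref{maintheorem}: a torsion-free group is lcH-slender provided it contains no copy of $\mathbb{Q}$ and no copy of $\mathbb{Z}_p$ for any prime $p$. Since torsion-freeness is assumed in both classes of the corollary, the whole task reduces to ruling out these two abelian embedding obstructions.

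A CAT(0) group, in the standard sense of a group acting geometrically (properly and cocompactly by isometries) on a CAT(0) space, is finitely generated, and any one-relator group is finitely generated as well. Hence both types of group are countable, whereas each $\mathbb{Z}_p$ is uncountable, so the $p$-adic obstruction is automatic in both settings. Only the presence of $\mathbb{Q}$ needs genuine input from the cited theorems.

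For the torsion-free CAT(0) case the approach is to use the flat torus theorem (Bridson--Haefliger, or the packaging given in \cite[Theorem I.4.1 (vii)]{Da}): every abelian subgroup of a torsion-free CAT(0) group is free abelian of finite rank. Since $\mathbb{Q}$ is not finitely generated, it does not embed in any $\mathbb{Z}^n$, which excludes it. For the torsion-free one-relator case the approach is to quote Newman's structural result \cite[Theorem 1]{N}, whose conclusion about abelian subgroups of a torsion-free one-relator group I expect to be tight enough to forbid the divisible group $\mathbb{Q}$, for instance by forcing abelian subgroups to be cyclic or otherwise of bounded divisibility.

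The argument is therefore a two-line application of Theorem \ref{maintheorem} together with the two cited structure theorems, and the main (and minor) obstacle is purely bibliographical: one must check that each quoted statement is in fact strong enough to preclude a copy of $\mathbb{Q}$ inside the relevant group. Once that is confirmed, Corollary \ref{niceclasses} follows mechanically.
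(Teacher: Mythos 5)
Your proposal is correct and is essentially the paper's own argument: the corollary is obtained by feeding the cited subgroup-structure results into Theorem \ref{maintheorem} (abelian subgroups of a CAT(0) group are finitely generated by \cite[Theorem I.4.1 (vii)]{Da}, so neither $\mathbb{Q}$ nor $\mathbb{Z}_p$ embeds, and similarly for one-relator groups via \cite[Theorem 1]{N}). On your one flagged uncertainty: Newman's Theorem 1 classifies the abelian subgroups of a one-relator group as cyclic, free abelian of rank $2$, or subgroups of the $m$-adic rationals $\mathbb{Z}[1/m]$ for some fixed $m$ (they need not be cyclic, e.g.\ $\mathbb{Z}[1/2]$ in the Baumslag--Solitar group), and this bounded-divisibility form is exactly what excludes $\mathbb{Q}$ --- a statement asserting only ``locally cyclic'' would not have sufficed, since $\mathbb{Q}$ is locally cyclic.
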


An analogous notion of slenderness is defined by replacing the locally compact Hausdorff groups with the completely metrizable groups: $G$ is \emph{completely metrizable slender} (abbrev. \emph{cm-slender}) if every homomorphism from a completely metrizable topological group to $G$ has open kernel.  Similarly $G$ is \emph{n-slender} if every abstract group homomorphism from the fundamental group of a Peano continuum to $G$ has an open kernel.  As the proscribed subgroups in Theorem \ref{maintheorem} are neither cm-slender nor n-slender (see \cite[Theorem C]{C} and \cite[Theorem 3.3]{E}) we imediately obtain:

\begin{corollary}\label{slendernessimplications}  If a group $G$ is either n-slender or cm-slender then $G$ is lcH-slender.
\end{corollary}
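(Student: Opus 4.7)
The plan is to reduce the corollary to Theorem \ref{maintheorem} via a subgroup-closure observation. Specifically, I would first verify that both cm-slenderness and n-slenderness pass to subgroups, and then argue contrapositively: any group containing one of the subgroups forbidden by Theorem \ref{maintheorem} cannot be cm-slender or n-slender, so conversely any cm-slender or n-slender group satisfies the hypothesis of Theorem \ref{maintheorem} and is therefore lcH-slender.

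The subgroup-closure step is essentially formal. Suppose $H \leq G$ and $\phi\colon T \to H$ is a homomorphism, where $T$ is either completely metrizable or the fundamental group of a Peano continuum. Post-composition with the inclusion $\iota\colon H \hookrightarrow G$ produces a homomorphism $\iota\circ\phi\colon T \to G$ whose kernel equals $\ker \phi$; in particular, if $\ker \phi$ fails to be open in $T$, then so does $\ker(\iota\circ\phi)$. Hence any subgroup of a cm-slender (respectively n-slender) group is itself cm-slender (respectively n-slender).

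Putting the pieces together: assume $G$ is cm-slender or n-slender. The sentence preceding the statement of the corollary records, via \cite[Theorem C]{C} and \cite[Theorem 3.3]{E}, that the subgroups proscribed by Theorem \ref{maintheorem}, namely nontrivial torsion groups, $\mathbb{Q}$, and the p-adic integers $\mathbb{Z}_p$ for each prime $p$, are none of them cm-slender and none of them n-slender. Combined with the previous paragraph, this forces $G$ to be torsion-free and to contain neither $\mathbb{Q}$ nor any $\mathbb{Z}_p$; Theorem \ref{maintheorem} then delivers lcH-slenderness. I do not anticipate any genuine obstacle beyond verifying that the cited results really cover each of the three proscribed classes, since the meat of the work is already contained in Theorem \ref{maintheorem}.
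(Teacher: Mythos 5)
Your argument is correct and is essentially the paper's own: the authors deduce the corollary ``immediately'' from the fact that the proscribed subgroups (torsion, $\mathbb{Q}$, $\mathbb{Z}_p$) are neither cm-slender nor n-slender, which is exactly your contrapositive via Theorem \ref{maintheorem}, with the routine subgroup-closure step (composing with the inclusion preserves the kernel) spelled out explicitly.
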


\end{section}

\begin{section}{Proof of Theorem \ref{maintheorem}}\label{proof}

Towards the proof of Theorem \ref{maintheorem} we recall some notions from abelian group theory (see \cite{Fu}).  We use convenient alternative characterizations which suit our purposes, rather than the historical definitions.  Let $A$ be an abelian group.  We say $A$ is \emph{algebraically compact} if $A$ is an abstract direct summand of a Hausdorff compact abelian group.  We say $A$ is \emph{cotorsion} if it is a homomorphic image of an algebraically compact group.  Also, $A$ is \emph{cotorsion-free} if the only cotorsion subgroup of $A$ is the trivial one.  Importantly $A$  is cotorsion-free if and only if $A$ is torsion-free and contains no copy of $\mathbb{Q}$ or the p-adic integers $\mathbb{Z}_p$ for any prime $p$  \cite[Theorem 13.3.8]{Fu}.

\begin{lemma}\label{compactcase}  Suppose $\phi: K \rightarrow G$ is an abstract group homomorphism with $K$ a compact topological group and $G$ torsion-free and not including $\mathbb{Q}$ or any $\mathbb{Z}_p$ as a subgroup.  Then $\phi$ is trivial.
\end{lemma}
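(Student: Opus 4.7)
The plan is to leverage the characterization of cotorsion-free abelian groups recalled just before the lemma: an abelian group is cotorsion-free if and only if it is torsion-free and omits $\mathbb{Q}$ and each $\mathbb{Z}_p$. The hypotheses on $G$ say exactly that every abelian subgroup of $G$ is cotorsion-free. So it suffices to show, for each $k \in K$, that $\phi(k)$ lies in an abelian subgroup of $G$ that is simultaneously cotorsion. Since a cotorsion-free group is a cotorsion subgroup of itself only when trivial, this will force $\phi(k) = 1$.

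To carry this out, fix $k \in K$ and let $A_k := \overline{\langle k \rangle}$, the closure in $K$ of the cyclic subgroup generated by $k$. Since the closure of an abelian subgroup of a Hausdorff topological group is abelian, and since $K$ is compact, $A_k$ is a compact Hausdorff abelian topological group. By the definition stated in the excerpt, $A_k$ (being a direct summand of itself) is algebraically compact; thus the homomorphic image $\phi(A_k)$ of $A_k$ under the restriction $\phi|_{A_k}$ is a cotorsion group. But $\phi(A_k)$ is also an abelian subgroup of $G$, hence torsion-free with no copy of $\mathbb{Q}$ or any $\mathbb{Z}_p$, and therefore cotorsion-free. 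A group that is simultaneously cotorsion and cotorsion-free is trivial. Hence $\phi(k) \in \phi(A_k) = \{1\}$, and since $k$ was arbitrary, $\phi$ is trivial.

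I do not anticipate a significant obstacle. The argument reduces the possibly nonabelian statement to the abelian cotorsion/cotorsion-free dichotomy via the elementary observation that each element of a compact Hausdorff group lies in a compact abelian subgroup, namely the closure of the cyclic subgroup it generates. The only care needed is verifying that the definition of \emph{algebraically compact} used in the excerpt applies to $A_k$, which is immediate because $A_k$ is itself a compact Hausdorff abelian group.
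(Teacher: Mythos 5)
Your proposal is correct and follows essentially the same route as the paper: restrict $\phi$ to the compact abelian subgroup $\overline{\langle k\rangle}$, observe its image is cotorsion (as a homomorphic image of an algebraically compact group) yet cotorsion-free by the hypotheses on $G$ via the cited characterization, hence trivial. The only difference is that you spell out the intermediate steps (algebraic compactness of $\overline{\langle k\rangle}$ and the cotorsion/cotorsion-free clash) in slightly more detail than the paper does.
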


\begin{proof}   Let $k\in K$ be given.  It is easy to verify that the closure $\overline{\langle k\rangle}\leq K$ of the cyclic subgroup generated by $k$ is compact abelian.  Then $\phi(\overline{\langle k\rangle})$ is abelian and cotorsion, on the one hand, but cotorsion-free on the other hand by our assumptions on $G$.  Therefore $\phi(\overline{\langle k\rangle})$ is trivial and the lemma is proved.
\end{proof}

Next we state some classical results regarding locally compact groups.

\begin{proposition}  Let $L$ be a locally compact Hausdorff group.

\begin{enumerate}

\item (Iwasawa's structure Theorem, \cite[Theorem 13]{I}) If $L$ is connected then we can write $L = H_0\cdots H_j K$ where each $H_i$ is a subgroup of $L$ isomorphic to $\mathbb{R}$ and $K$ is a compact subgroup of $L$.

\item (van Dantzig's Theorem, \cite[III \S 4, No. 6]{B}) If $L$ is locally compact Hausdorff and totally disconnected then $L$ has a compact open subgroup.

\end{enumerate}

\end{proposition}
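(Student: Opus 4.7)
The plan is to address the two parts separately, since they rest on quite different machinery; a standard write-up simply cites the classical sources, as the paper does, but here is how I would approach actual proofs.

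For van Dantzig's theorem (part 2), I would proceed constructively. Local compactness provides a compact neighborhood of the identity, and since a compact Hausdorff totally disconnected space is zero-dimensional, I can refine it to a compact \emph{open} neighborhood $U$ of $e$. Set $H = \{g \in L : gU = U\}$; this is immediately a subgroup of $L$, and $H \subseteq U$ since $g = g \cdot e \in gU$. The essential step is openness of $H$. For each $u \in U$, continuity of multiplication at $(e,u)$ provides open neighborhoods $W_u \ni e$ and $V_u \ni u$ with $W_u V_u \subseteq U$; finitely many $V_{u_i}$ cover $U$ by compactness. Intersecting the corresponding $W_{u_i}$ and passing to a symmetric subneighborhood $W$ yields both $WU \subseteq U$ and $W^{-1}U \subseteq U$, hence $WU = U$ and $W \subseteq H$. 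Open subgroups of a topological group are automatically closed, so $H \subseteq U$ is a compact open subgroup.

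For Iwasawa's theorem (part 1), the main obstacle is the depth of the result, which sits downstream of the Gleason--Yamabe solution to Hilbert's fifth problem. My strategy would be to invoke Gleason--Yamabe to realize $L$ as a projective limit of connected Lie groups $L/K_\alpha$ with $K_\alpha$ ranging over arbitrarily small compact normal subgroups. For a connected Lie group one combines the Levi decomposition with the Iwasawa decomposition $G = KAN$ on the semisimple factor to express $G$ as a product of finitely many copies of $\mathbb{R}$ with a maximal compact subgroup. The technical heart is then lifting this decomposition back through the projective system coherently: absorbing the kernels $K_\alpha$ together with the compact factors of the quotients into a single compact subgroup $K \leq L$, and selecting compatible one-parameter subgroups of $L$ to cover the $\mathbb{R}$-factors of the quotients. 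This passage through the inverse limit, rather than the Lie-group decomposition itself, is where the bulk of the work lies.
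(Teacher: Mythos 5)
The paper does not prove this proposition at all: both parts are quoted as classical results, with part (1) cited to Iwasawa's Theorem 13 and part (2) to Bourbaki, and the proofs are left entirely to those sources. So anything you prove here goes beyond the paper. Your argument for part (2) is correct and is essentially the standard proof of van Dantzig's theorem: total disconnectedness plus local compactness gives a compact open neighborhood $U$ of the identity, the compactness-of-$U$ argument produces a symmetric neighborhood $W$ of $e$ with $WU=U$, and the stabilizer $H=\{g: gU=U\}$ is then an open subgroup contained in $U$, hence closed in $U$ and compact. That part is complete and matches what one finds behind the paper's citation.

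Part (1), however, is a plan rather than a proof, and you say so yourself: the ``technical heart'' of absorbing the kernels $K_\alpha$ and the compact factors of the Lie quotients into one compact subgroup of $L$, and choosing one-parameter subgroups of $L$ itself that realize the $\mathbb{R}$-factors, is exactly the content of Iwasawa's Theorem 13, and you do not carry it out. Two further points deserve care if you ever do. First, even in the Lie case, Levi plus the $KAN$ decomposition of the semisimple part is not quite enough: the solvable radical must also be written as a product of one-parameter subgroups (and its maximal compact torus folded into $K$), which needs a separate induction. Second, the reduction of an arbitrary connected locally compact group to a projective limit of Lie groups is Gleason--Yamabe, which postdates Iwasawa; his theorem is stated for such projective limits, so your route is indeed the standard modern one, but invoking it means the proof rests on a theorem strictly deeper than anything else in this paper. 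Given that the paper itself treats both statements as black boxes, citing \cite{I} for part (1), as you suggest at the outset, is the reasonable course; presented as a proof, your part (1) has a genuine gap at precisely the step you flag.
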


\begin{proof}[Proof of Theorem \ref{maintheorem}]

If $G$ is lcH-slender it is certainly necessary that $G$ is torsion-free and not include $\mathbb{Q}$ or any $\mathbb{Z}_p$.  If, for example, $G$ were to include torsion then $G$ would contain a subgroup of prime order $p$ and one can construct a discontinuous homomorphism $\phi: \prod_{\omega}\mathbb{Z}/p\mathbb{Z} \rightarrow G$ using a vector space argument.  Similarly if $G$ includes $\mathbb{Q}$ as a subgroup one produces a discontinuous homomorphism $\phi: \mathbb{R} \rightarrow G$ by selecting a Hamel basis for $\mathbb{R}$.  Were $G$ to include $\mathbb{Z}_p$ as a subgroup then the inclusion map $\mathbb{Z}_p \rightarrow G$ witnesses that $G$ is not lcH-slender.

For the other implication we suppose that $G$ is torsion-free and does not include $\mathbb{Q}$ or any $\mathbb{Z}_p$ and let $\phi:L \rightarrow G$ be an abstract group homomorphism with $L$ a locally compact Hausdorff group.  Let $L^\circ$ denote the connected component of the identity element.  Since $L^\circ$ is closed, it is itself locally compact Hausdorff, and as $L^\circ$ is also connected we know by Iwasawa's structure Theorem that $L^\circ = H_0\cdots H_j K$ with each $H_i$ a subgroup isomorphic to $\mathbb{R}$ and $K$ a compact subgroup.  We know that $\phi\upharpoonright K$ is trivial by Lemma \ref{compactcase} and since $G$ is torsion-free and includes no subgroup isomorphic to $\mathbb{Q}$ we know that $\phi\upharpoonright H_i$ is trivial for each $i$.  Thus $\phi(L^\circ)$ is trivial.

Now the homomorphism $\phi: L \rightarrow G$ passes to a homomorphism $\overline{\phi}: L/L^\circ \rightarrow G$.  By van Dantzig's Theorem we have a compact open subgroup $K' \leq L/L^\circ$.  Again by Lemma \ref{compactcase} we know that $\overline{\phi}\upharpoonright K'$ is trivial, and thus $\pi^{-1}(K') \leq \ker(\phi)$ witnesses that $\ker(\phi)$ is open, where $\pi: L \rightarrow L/L^\circ$ is the continuous projection.
\end{proof}

\begin{remark}  A nice subgroup characterization for n-slender and cm-slender groups is still apparently beyond reach.  Thus far a classification exists for such groups only in the abelian case \cite{C}.
\end{remark}

\end{section}

%\section*{Acknowledgement}

\end{document}